\def\lf{\left}
\def\ri{\right}
\def\a{{\alpha}}
\def\wt{\widetilde}
\def\p{\partial}
\newcommand\R{{\mathbb R}}
\newcommand\C{{\mathbb C}}
\newcommand\tM{{\tilde M}}
\def\ii{\sqrt{-1}}
\def\jbar{{\bar\jmath}}
\def\K{K\"ahler }
\def\A{Amp\`{e}re }
\def\be{\begin{equation}}
\def\ee{\end{equation}}
\def\lf{\left}
\def\ri{\right}
\def\a{{\alpha}}
\def\ijb{{i\jbar}}
\def\Ric{\text{\rm Ric}}
\def\wt{\widetilde}
\def\p{\partial}
\def\C{\Bbb C}
\def\wt{\widetilde}
\def\p{\partial}
\def\p{\partial}
\def\C{\Bbb C}
\def\ii{\sqrt{-1}}
\def\lf{\left}
\def\ri{\right}
\def\a{{\alpha}}
\def\ijb{{i\jbar}}
\def\Ric{\text{\rm Ric}}
\def\wt{\widetilde}
\def\p{\partial}
\def\C{\Bbb C}
\def\wt{\widetilde}
\def\p{\partial}
\def\p{\partial}
\def\C{\Bbb C}
\def\ii{\sqrt{-1}}
\def\tM{\wt M}
\def\vol{\text{Vol}}
\def\t{\tilde}
\newtheorem{thm}{Theorem}[section]
\newtheorem{lem}{Lemma}[section]
\newtheorem{cor}{Corollary}[section]
\theoremstyle{definition}
\newtheorem{defn}{Definition}[section]
\theoremstyle{remark}
\numberwithin{equation}{section}
\begin{document}
\title{On quadratic orthogonal bisectional curvature}

\author{Albert Chau$^1$}

\address{Department of Mathematics,
The University of British Columbia, Room 121, 1984 Mathematics
Road, Vancouver, B.C., Canada V6T 1Z2} \email{chau@math.ubc.ca}

\author{Luen-Fai Tam$^2$}
\address{The Institute of Mathematical Sciences and Department of
 Mathematics, The Chinese University of Hong Kong,
Shatin, Hong Kong, China.} \email{lftam@math.cuhk.edu.hk}
\thanks{$^1$Research
partially supported by NSERC grant no. \#327637-11}
\thanks{$^2$Research partially supported by Hong Kong  RGC General Research Fund
\#CUHK 403011}

\begin{abstract}
In this article we study compact \K manifolds satisfying a certain nonnegativity condition on the bisectional curvature.  Under this condition, we show that the scalar curvature is nonnegative and that the first Chern class is positive assuming local irreducibility.  We also obtain a partial classification of possible de Rham decompositions of the universal cover under this condition.  \end{abstract}

\maketitle
\markboth{Albert Chau and Luen-Fai Tam} {Quadratic orthogonal bisectional curvature}

 \section{Introduction}

We begin with the following definition.
  \begin{defn}\label{def-QOBC} A \K manifold  $(M, g)$ of complex dimension $n$ is said to have {\it nonnegative  quadratic orthogonal   bisectional  curvature (NQOBC)} at $p\in M$ if $n\geq 2$ and: {\it for any} unitary frame $\{e_1,\dots,e_n\}$ of $T^{(1,0)}_p(M)$ and {\it any} real numbers $\xi_1,...,\xi_n$ we have
\begin{equation}\label{e1}
\sum_{i,j=1}^n R_{i\bar{i}j\bar{j}} (\xi_i -\xi_j)^2 \geq 0.
\end{equation}
We will say that a manifold $(M, g)$ has NQOBC provided it does at every point.
\end{defn}
One can similarly define the notion of nonpositive orthogonal quadratic  bisectional  curvature. Note that if a product of \K manifolds $M_1 \times M_2$ has NQOBC then so must each factor $M_1$ and $M_2$. The reverse implication however may be false in general:  $M_1$ and $M_2$ may both have NQOBC while $M_1 \times M_2$ may not.

The condition of NQOBC is weaker than requiring $M$ to have nonnegative orthogonal bisectional curvature: $R(V,\bar V,W,\bar W)\geq 0$ for any orthogonal unitary pair $V, W\in T^{(1,0)}(M)$, while on complex surfaces the two conditions are equivalent.    Li, Wu and Zheng  \cite{LiWuZheng} are able to construct examples of compact \K
manifolds $M$ with NQOBC which do not admit
any \K metrics with nonnegative orthogonal bisectional curvature.

 The structure of compact \K manifolds with nonnegative orthogonal
bisectional curvature on the other hand, is now completely understood by
the works of Chen \cite{Chen} and Gu and Zhang \cite{GZ}. Their results
extend those on the generalized Frankel conjecture for compact manifolds
with nonnegative holomorphic bisectional curvature, formulated by Yau \cite[p.677] {Yau}, and
established through the works of Howard-Smyth-Wu \cite{HowardSmythWu1981}, Wu \cite{Wu} and eventually by Bando \cite{Bando} in three-dimension and Mok \cite{Mok} for all dimensions. Frankel's
conjecture for compact manifolds with positive holomorphic bisectional
curvature was established by Mori \cite{M}   and Siu and Yau \cite{SY} independently.

The NQOBC condition arises naturally in the proof of the following fact which appeared implicitly in an earlier work \cite{BG}:
  \begin{lem}\label{l-keyfact}
  If $(M^n, g)$ has NQOBC, then all harmonic $(1,1)$ forms are parallel.
  \end{lem}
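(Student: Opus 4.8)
The plan is to use the Bochner–Weitzenböck formula for harmonic $(1,1)$ forms on a compact \K manifold. If $\phi$ is a harmonic $(1,1)$ form, then $\Delta_d \phi = 0$ forces, via the Bochner identity, the integral relation $\int_M \left( |\nabla \phi|^2 + Q(\phi) \right)\, dV = 0$, where $Q(\phi)$ is a pointwise curvature term built from the Riemann tensor contracted against $\phi$. So the whole point is to show that on a manifold with NQOBC this curvature term $Q(\phi)$ is pointwise nonnegative for every real $(1,1)$ form $\phi$; then both $|\nabla\phi|^2$ and $Q(\phi)$ must vanish identically, giving $\nabla\phi = 0$.

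The key step is therefore the algebraic identification of the curvature term. First I would fix a point $p$ and choose a unitary frame $\{e_1,\dots,e_n\}$ of $T^{(1,0)}_p(M)$ that diagonalizes the Hermitian symmetric form $\phi$, so that $\phi = \ii \sum_i \lambda_i\, e^i \wedge \bar e^i$ with real eigenvalues $\lambda_i$. In such a frame one computes that the Bochner curvature term reduces precisely to
\be
Q(\phi) = \sum_{i,j=1}^n R_{i\bar i j \bar j}\,(\lambda_i - \lambda_j)^2 ,
\ee
after the Ricci-type and full-curvature contributions are combined and the diagonal terms $i=j$ cancel. (This is essentially the computation already implicit in \cite{BG}; the cross terms involving off-diagonal $R_{i\bar j k\bar l}$ drop out because $\phi$ is diagonal in this frame.) Taking $\xi_i = \lambda_i$ in Definition \ref{def-QOBC} shows $Q(\phi)\ge 0$ at $p$, and since $p$ was arbitrary, $Q(\phi)\ge 0$ on $M$.

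I expect the main obstacle to be the bookkeeping in the second step — writing out the Weitzenböck curvature operator acting on $(1,1)$ forms, passing from real to complex indices, and verifying that in a frame diagonalizing $\phi$ it collapses exactly to the quadratic expression $\sum_{i,j} R_{i\bar i j\bar j}(\lambda_i-\lambda_j)^2$ with no residual terms. One must be careful that the frame can be chosen unitary and $\phi$-diagonalizing simultaneously (possible since $\phi$ corresponds to a Hermitian symmetric matrix) and that the Kähler symmetries of the curvature tensor are used correctly so that the off-diagonal curvature components genuinely contract to zero against the diagonal form. Once the identity is in hand, the conclusion is immediate: integrating, $\int_M |\nabla\phi|^2 \le \int_M |\nabla\phi|^2 + \int_M Q(\phi) = 0$, hence $\nabla\phi \equiv 0$, i.e. $\phi$ is parallel. $\hfill\square$
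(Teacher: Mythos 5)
Your proposal is correct and follows essentially the same route the paper indicates: the Bochner--Weitzenb\"ock formula for harmonic $(1,1)$ forms, with the curvature term reducing in a unitary frame diagonalizing $\phi$ (eigenvalues $\lambda_i$) to a positive multiple of $\sum_{i,j}R_{i\bar i j\bar j}(\lambda_i-\lambda_j)^2$, using $R_{i\bar i}=\sum_j R_{i\bar i j\bar j}$ and the symmetry $R_{i\bar i j\bar j}=R_{j\bar j i\bar i}$, so that NQOBC makes the term nonnegative and integration gives $\nabla\phi\equiv 0$. The paper itself only sketches this, referring to the Bochner formula for $(1,1)$ forms and \cite{BG}, so your fleshed-out argument matches the intended proof.
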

The proof uses the Bochner formula for $(1,1)$ forms.  The lemma is implicit from earlier works though was only used in these under the stronger assumptions of positive and nonnegative holomorphic bisectional curvature  (see for example  \cite{BG, GK, HowardSmythWu1981} and references therein).  We will make use of Lemma \ref{l-keyfact} at various points.

 The NQOBC condition was first considered explicitly by Wu-Yau-Zheng in \cite{WuYauZheng} where the authors studied the boundary of the \K cone of manifolds with NQOBC.

 In this paper, we try to understand more on  this class of \K manifolds.
We first prove that: {\it If a \K manifold has NQOBC at a point $p$, then the scalar
curvature is nonnegative at $p$ and is zero if and only if it is flat at $p$, provided
the complex dimension is at least 3}. See Theorem \ref{t-scalarcurvature} for more details.
Using this result we prove: {\it If $(M,g)$ is a compact \K manifold with NQOBC which is locally irreducible, then $c_1(M)>0$}. This generalizes \cite[Theorem 2.1]{GZ}.  This naturally leads us to consider the deRham factorization of the universal cover of $M$ for which we prove the following. {\it All compact factors have positive first Chern class.  There is at most one non-compact factor being either: complex Euclidean space (in which case all compact factors will have quasi positive Ricci curvature),  or 1-dimensional (in which case all  compact factors will have positive Ricci curvature)}.  This gives a slight refinement of \cite[Theorem 1.3 (2)]{GZ}.  For more detailed description, see Theorem \ref{t-reduce}.

The organization of the paper is as follows. In Section \ref{s-cone}, we will give a quick proof of a result in \cite{WuYauZheng} on the boundary of the \K cone of manifolds with NQOBC which is similar to but independent of the   proof  in \cite{Z} and the idea is similar. In Section \ref{scalar}, we will prove that \K manifolds with NQOBC must have nonnegative scalar curvature.  In Section 4 we prove the above mentioned structure results for compact \K manifolds with NQOBC.

 The authors would like to thank D. Wu, S.T. Yau and  F. Zheng for helpful comments and interest in this work.

\section{Boundary of the \K cone of manifolds with NQOBC}\label{s-cone}

The boundary of the \K cone of $(M, g)$ turns out to be rather special: every boundary class $\alpha$ contains a smooth nonnegative representative $\eta \in \alpha$.  This was first proved in \cite{WuYauZheng} by solving a degenerate complex Monge-\A equation under the condition of NQOBC. This motivates us to study structure of \K manifolds with NQOBC.   In fact, the following holds:

\begin{thm}\label{s2t1}
Suppose $(M^n, g)$ is a compact complex n-dimensional \K manifold with NQOBC.  Let $\alpha$ be in the closure of the \K cone of $M$ and $\eta$ be the unique harmonic representative in $\alpha$.  Then $\eta$ is nonnegative.  Moreover, $\eta$ is positive if and only if $\alpha^n[M] >0$.
\end{thm}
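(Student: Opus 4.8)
The plan is to use Lemma~\ref{l-keyfact} to turn the statement into an assertion about the \emph{eigenvalues} of the harmonic representative, and then to invoke nothing beyond the positivity of intersection numbers of \K classes. Let $\omega$ be the \K form of $g$. Since $\alpha\in H^{1,1}(M;\R)$, its unique harmonic representative $\eta$ is a real harmonic $(1,1)$-form, hence parallel by Lemma~\ref{l-keyfact}; as $g$ is parallel too, the eigenvalues $\lambda_1\le\cdots\le\lambda_n$ of $\eta$ with respect to $g$ are real and constant on the connected manifold $M$, and at each point some unitary frame simultaneously diagonalizes $\eta$ and $\omega$. In particular $\eta$ is nonnegative (resp.\ positive) as a $(1,1)$-form if and only if all $\lambda_i\ge 0$ (resp.\ all $\lambda_i>0$), which is what must be shown.

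To prove $\lambda_i\ge 0$, fix $\e>0$ and set $\eta_\e=\eta+\e\omega$. Since $\omega$ is parallel, $\eta_\e$ is again parallel, with constant eigenvalues $\lambda_i+\e$, and its class $\alpha+\e[\omega]$ is \K because $\alpha$ lies in the closure of the \K cone and $[\omega]$ is \K. For each $k=0,\dots,n$ the form $\eta_\e^{k}\wedge\omega^{n-k}$ is a parallel $(n,n)$-form on the connected oriented manifold $M$, hence a constant multiple of $\omega^n$; evaluating at a point in a frame where $\eta$ and $\omega$ are simultaneously diagonal identifies this constant as a positive multiple of the elementary symmetric polynomial $e_k(\lambda_1+\e,\dots,\lambda_n+\e)$. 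On the other hand $\int_M\eta_\e^{k}\wedge\omega^{n-k}=(\alpha+\e[\omega])^k\cdot[\omega]^{n-k}[M]$, which is positive since both $\alpha+\e[\omega]$ and $[\omega]$ are \K classes (represent them by \K forms and note the wedge of \K forms is a positive volume form). Hence $e_k(\lambda_1+\e,\dots,\lambda_n+\e)>0$ for every $k$. Now a tuple of real numbers all of whose elementary symmetric functions are positive must consist entirely of positive numbers: otherwise the monic polynomial $\prod_{i}\bigl(t-(\lambda_i+\e)\bigr)=\sum_{k=0}^n(-1)^k e_k\, t^{n-k}$ would vanish at some $t\le 0$, which is impossible since for such $t$ every term $(-1)^k e_k t^{n-k}$ either vanishes or has the sign of $(-1)^n$, while the $k=n$ term equals $(-1)^n e_n\ne 0$. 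Therefore $\lambda_i+\e>0$ for all $i$; letting $\e\downarrow 0$ gives $\lambda_i\ge 0$, so $\eta$ is nonnegative.

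For the last assertion, $\eta^n=\bigl(\prod_i\lambda_i\bigr)\omega^n$ pointwise, so $\alpha^n[M]=\int_M\eta^n=\bigl(\prod_i\lambda_i\bigr)\int_M\omega^n$ with $\int_M\omega^n>0$. Since all $\lambda_i\ge 0$, it follows that $\alpha^n[M]>0$ if and only if $\prod_i\lambda_i>0$, equivalently all $\lambda_i>0$, equivalently $\eta$ is positive.

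The only genuine inputs are Lemma~\ref{l-keyfact} (parallelism of harmonic $(1,1)$-forms under NQOBC) and the standard fact that $\kappa_1^{k}\cdot\kappa_2^{n-k}[M]>0$ whenever $\kappa_1,\kappa_2$ are \K classes; the remaining ingredients --- constancy of the eigenvalues of a parallel Hermitian form, the behaviour of everything upon adding $\e\omega$, and the elementary symmetric function observation --- are routine, so I expect no serious obstacle once Lemma~\ref{l-keyfact} is in hand. The one point worth emphasizing is that $\eta$ is only known to be \emph{harmonic}, not a \K form; it is precisely its parallelism, rather than any a priori positivity of the representative, that drives the argument.
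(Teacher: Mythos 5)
Your argument is correct, and it reaches the conclusion by a slightly different mechanism than the paper, so a comparison is worth recording. Both proofs start identically: Lemma \ref{l-keyfact} makes $\eta$ parallel, hence simultaneously diagonalizable with $\omega$ with constant eigenvalues, and both then exploit positivity of intersection numbers of K\"ahler classes obtained by perturbing $\alpha$ into the cone (your $\alpha+\epsilon[\omega]$ is indeed K\"ahler under the paper's definition of the closure, since it is $(1+\epsilon)$ times $(1-t)[\omega]+t\alpha$ with $t=1/(1+\epsilon)$). The difference is in which intersection numbers are used. The paper considers only the top self-intersections $\int_M\bigl((1-t)\omega_0+t\eta\bigr)^n=\mathrm{Vol}_g(M)\prod_i(1-t+ta_i)>0$ for $t\in[0,1)$, and gets an immediate contradiction because a negative eigenvalue $a_k$ would force the linear factor $1-t+ta_k$ to vanish at some $t_0\in(0,1)$; this needs no auxiliary algebraic fact. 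You instead compute all the mixed numbers $(\alpha+\epsilon[\omega])^k\cdot[\omega]^{n-k}[M]$, identify them with elementary symmetric functions $e_k(\lambda_i+\epsilon)$, and then invoke the (correctly proved) lemma that positivity of all $e_k$ forces positivity of all roots, finishing with $\epsilon\downarrow 0$. Your route requires the extra inputs of positivity of mixed K\"ahler intersection numbers and the symmetric-function observation, so it is a bit heavier, but it is equally rigorous and has the mild advantage of never varying a parameter inside the cone closure: one fixed $\epsilon$ already pins down the signs of all eigenvalues. The final equivalence ($\eta>0$ iff $\alpha^n[M]>0$) is handled the same way in both arguments, via $\eta^n=\bigl(\prod_i\lambda_i\bigr)\omega^n$ and the already-established nonnegativity of the $\lambda_i$.
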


A short proof of Theorem \ref{s2t1} was provided in \cite{Z} and also independently in \cite{CT}.  We provide details of the proof here for the sake of  completeness.  We follow the presentation in \cite{CT} which we think is more concise.      We refer to \cite{CT} where it was also shown that a rather straight forward observation on the proof in \cite{WuYauZheng} leads to another proof of Theorem \ref{s2t1}.

 Given a complex manifold $M$, recall that a real class $\alpha\in H^{(1,1)}(M)$ is called a \K class if $\alpha$ contains a smooth positive definite representative $\eta$.  The space of \K classes is a convex cone in $H^{(1,1)}(M)$  referred to as the \K cone which we denote by $\mathcal{K}$.  We say that $\alpha$ is in the closure of $\mathcal{K}$ if $[(1-t)\omega+t\eta] \in\mathcal{K}$ for any smooth $\eta\in\alpha$, $\omega\in\mathcal{K}$ and $t\in[0,1)$.  Finally, given any real $\alpha\in H^{(1,1)}(M)$ we use $\alpha^n[M]$ to denote the integral $\int_M  \alpha^n$.

\begin{proof} [Proof of Theorem \ref{s2t1}]
 Let $\eta$ be as in Theorem \ref{s2t1} and let $\omega_0$ be the \K form for $(M, g)$.  By the above remarks, $\eta$ is parallel and thus has constant real eigenvalues $a_1,...,a_n$ on $M$ with respect to $\omega_0$.  Also, $[(1-t)\omega_0+t\eta]\in \mathcal{K}$  for every $t\in[0,1)$.

  In other words, for each $t\in[0,1)$ there exists $f_t\in C^{\infty}(M)$ and $\omega_t \in \mathcal{K}$ such that $(1-t)\omega_0+t\eta=\omega_t +dd^c f_t$, giving
$$
\vol_g (M)\prod_{i=1}^n(1-t+ta_i)= \int_M((1-t)\omega_0+t\eta)^n=\int_M\lf(\omega_{t}+dd^c f_{t}\ri)^n>0
$$
for all $t\in[0,1)$.  On the other hand, if $a_k<0$ for some $k$ then $1-t+ta_k$ and thus the product on the LHS above would vanish for some $t_0\in(0,1)$ giving a contradiction. Thus $a_i$ must be nonnegative for each $i$, in other words $\eta$ is nonnegative.  In particular, we have $\int_M \eta^n \geq  0$ with strict inequality if and only if $\eta$  is positive.  This completes the proof.
\end{proof}

We end this section with the following description of the boundary of $\mathcal{K}$ from \cite{CT}.  Let $\wt M$ be the universal cover of $M$ with projection $\pi:\wt M\to M$.  Then by the de Rham decomposition Theorem for \K manifolds, we may write $$(\wt M, \wt \omega_0)=(\wt M_0, \wt \sigma_0)\times (\wt M_1, \wt \sigma_1)\times\cdots\times (\wt M_k , \wt \sigma_k)$$ where $\wt \omega_0=\pi^*(\omega_0)$, $\tM_0$ is flat and  each $\tM_i$, for $i\ge 1$,  is nonflat irreducible and \K and the decomposition is unique up to permutation.

 \begin{cor}\label{c1} Let $\eta$ be a real harmonic (1,1) form on $M$, and let $\wt \eta=\pi^*(\eta)$. Then $\wt\eta=\wt \eta_0\times\prod_{i=1}^k a_i\wt \sigma_i$ for some real constants $a_i$ and $\wt\eta_0$ is a parallel harmonic form on $\tM_0$. In particular, the boundary of $\mathcal{K}$ can be identified with the space of harmonic $(1,1)$ forms $\wt \eta$ on $\wt M$ satisfying: $\wt \eta$ is equivariant with respect to $\pi_1(M)$ and $\wt\eta=\wt\eta_0\times\prod_{i=0}^k a_i\wt \sigma_i$, where $\wt \eta_0\ge0$ and $a_i\ge0$ for all $i$ with equality holding for some $i$.
\end{cor}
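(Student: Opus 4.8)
The plan is to combine Lemma \ref{l-keyfact} with the de Rham decomposition for Kähler manifolds. First I would observe that $\wt\eta = \pi^*(\eta)$ is a harmonic $(1,1)$ form on $\wt M$, and since harmonicity is a local (pointwise, via the Bochner formula) property that descends to $M$, Lemma \ref{l-keyfact} applied on $M$ shows $\eta$ is parallel, hence $\wt\eta$ is parallel on $\wt M$. (If one prefers, the NQOBC condition on $M$ lifts to $\wt M$ and one can argue directly there, but the cleanest route is downstairs.) Now the de Rham decomposition $(\wt M,\wt\omega_0) = (\wt M_0,\wt\sigma_0)\times\cdots\times(\wt M_k,\wt\sigma_k)$ is, by the de Rham theorem, an orthogonal splitting of the tangent bundle into parallel subbundles that are invariant under the holonomy group and on which holonomy acts irreducibly (trivially on the flat factor $\wt M_0$). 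A parallel $(1,1)$ form is, at each point, a holonomy-invariant element of $\Lambda^{1,1}T^*$, so it respects this splitting: it has no "mixed" components between distinct factors (a mixed component would be a holonomy-invariant vector in an irreducible tensor product of inequivalent or nontrivially-acting holonomy representations, forcing it to vanish), and on each nonflat irreducible factor $\wt M_i$ the space of holonomy-invariant $(1,1)$ forms is one-dimensional — spanned by the Kähler form $\wt\sigma_i$ — by Schur's lemma applied to the (complex-irreducible) holonomy action. Hence $\wt\eta = \wt\eta_0 \times \prod_{i=1}^k a_i\wt\sigma_i$ with $\wt\eta_0$ parallel harmonic on the flat factor and $a_i\in\R$ constant, which is the first assertion.

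For the identification of $\partial\mathcal{K}$: I would first note that any harmonic $(1,1)$ form on $M$ pulls back to a $\pi_1(M)$-equivariant harmonic $(1,1)$ form on $\wt M$, and conversely every such equivariant form on $\wt M$ descends to a harmonic form on $M$; so harmonic $(1,1)$ forms on $M$ correspond bijectively to $\pi_1(M)$-equivariant harmonic $(1,1)$ forms on $\wt M$, and by the previous paragraph each of the latter has the product shape $\wt\eta_0\times\prod_{i=1}^k a_i\wt\sigma_i$. It remains to characterize, among these, the ones lying in the closure of $\mathcal{K}$. By the argument in the proof of Theorem \ref{s2t1}, a harmonic representative $\eta$ of a class $\alpha\in\ol{\mathcal{K}}$ is parallel with constant eigenvalues $a_1,\dots,a_n\ge 0$ relative to $\omega_0$; in terms of the decomposition these eigenvalues are precisely the eigenvalues of $\wt\eta_0$ on $\wt M_0$ together with the constants $a_i$ (each with multiplicity $\dim_{\C}\wt M_i$) on the factors $\wt M_i$, $i\ge 1$. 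Thus $\alpha\in\ol{\mathcal K}$ forces $\wt\eta_0\ge 0$ and $a_i\ge 0$ for all $i$. Conversely, if all these quantities are $\ge 0$ then $(1-t)\omega_0+t\eta$ has everywhere positive eigenvalues for $t\in[0,1)$, so $[(1-t)\omega_0+t\eta]\in\mathcal K$, i.e. $\alpha\in\ol{\mathcal K}$. Finally $\alpha\in\partial\mathcal K$ (boundary, not interior) exactly when $\alpha\notin\mathcal K$, i.e. when $\eta$ fails to be positive somewhere, which by the product structure means one of the factor-contributions degenerates: either $\wt\eta_0$ is not positive definite, or some $a_i=0$. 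Packaging "$\wt\eta_0\ge 0$ with equality possible" into the single index range by writing $\wt\eta = \wt\eta_0\times\prod_{i=0}^k a_i\wt\sigma_i$ (absorbing $a_0$ into $\wt\eta_0$, or reading the $i=0$ term as the statement $\wt\eta_0\ge 0$) yields the stated characterization: equality $a_i=0$, or non-strict positivity of $\wt\eta_0$, holding for some $i$.

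The only genuinely delicate point is the representation-theoretic step showing a parallel $(1,1)$ form must be block-diagonal with respect to the de Rham splitting and a constant multiple of $\wt\sigma_i$ on each nonflat irreducible factor. This is standard Berger-type holonomy theory — the holonomy of an irreducible Kähler factor acts irreducibly over $\C$ on $T^{(1,0)}$, so its invariants in $\mathrm{Hom}(T^{(1,0)},T^{(1,0)})$ are the scalars by Schur, while for two distinct factors $\mathrm{Hom}(T^{(1,0)}_i,T^{(1,0)}_j)$ has no nonzero invariants — but it is worth stating carefully since it is where the irreducibility in the de Rham decomposition is used. Everything else is bookkeeping: matching eigenvalues of $\eta$ on $M$ with the factorwise data on $\wt M$, and re-deriving the positivity criterion from the determinant/volume computation already carried out in the proof of Theorem \ref{s2t1}.
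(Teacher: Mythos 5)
Your proposal is correct, but the mechanism you use for the first assertion differs from the paper's. The paper argues as follows: since $\eta$ (hence $\wt\eta$) is parallel by Lemma \ref{l-keyfact}, the eigenspaces of $\wt\eta$ with respect to $\wt\omega_0$ are parallel, $J$-invariant distributions, so $\wt M$ splits as a product of \K manifolds whose tangent spaces are these eigenspaces; uniqueness of the de Rham decomposition together with irreducibility of the nonflat factors then forces each $\tM_i$, $i\ge 1$, to lie inside a single eigenspace, i.e.\ $\wt\eta$ restricts to $a_i\wt\sigma_i$ there, while on $\tM_0$ one only retains a parallel form $\wt\eta_0$. You instead work at the level of holonomy: a parallel $(1,1)$ form is an invariant of the product holonomy group, the nonflat factors contribute no invariant vectors so the mixed blocks vanish, and Schur's lemma applied to the complex-irreducible holonomy action on $T^{(1,0)}$ of each nonflat factor yields a constant real multiple of $\wt\sigma_i$. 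Both routes rest on the same two inputs (parallelism via Lemma \ref{l-keyfact} and de Rham irreducibility); yours trades the uniqueness clause of the de Rham theorem for the facts that the holonomy of the product is the product of the holonomies and that real irreducibility of a $J$-commuting holonomy action implies complex irreducibility on $T^{(1,0)}$. One small clean-up: the vanishing of the mixed blocks is best justified by $(V_i\otimes V_j)^{G_i\times G_j}=V_i^{G_i}\otimes V_j^{G_j}$ together with $V_i^{G_i}=0$ for a nonflat irreducible factor; your appeal to irreducibility of the tensor product or inequivalence of the factor representations is not quite the right reason (and inequivalence can fail, e.g.\ for two isometric factors), though the conclusion stands. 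Finally, your treatment of the boundary identification --- establishing the closure criterion $\wt\eta_0\ge 0$, $a_i\ge 0$ from Theorem \ref{s2t1} and its converse, then removing the interior $\mathcal{K}$ via positivity of the harmonic representative and $\alpha^n[M]>0$ --- is correct and considerably more explicit than the paper, whose proof leaves this step to the reader.
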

\begin{proof} Since $\eta$ is parallel, then $\wt M$ is a product of \K manifolds $\wt N_i$ such that at each point $T^{(1,0)}(N_i)$ are eigenspace of $\wt \eta$. By the uniqueness and irreducibility of the $\tM_i$'s for $i\ge 1$, the result follows.
\end{proof}

\section{Scalar curvature of manifolds with NQOBC}\label{scalar}
Let $(M^n,g)$ be a \K manifold. If $M$ has nonnegative holomorphic bisectional curvature, then the Ricci curvature of $M$ is nonnegative. If on the other hand $M$ only has NQOBC, then the Ricci curvature may have negative eigenvalues at some point, see \cite[Example 1.2]{GZ}. However, we have the following:

\begin{thm}\label{t-scalarcurvature} Suppose $(M^n,g)$ has nonnegative (resp. nonpositive) QOBC at $p$.  Then the scalar curvature $S(p)$ is nonnegative (resp. nonpositive) and $S(p)=0$ if and only if for all unitary pairs $V,W$ we have $R(V,\bar V,W,\bar W)=0$ and $R(V,\bar V,V,\bar V)+R(W,\bar W,W,\bar W)=0$. If $n\ge 3$, $S(p)=0$ also implies $R(V,\bar V,V,\bar V)=0$ for all $V\in T_p^{(1,0)}(M)$ and hence $M$ is flat at $p$.
\end{thm}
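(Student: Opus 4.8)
The plan is to squeeze a scalar inequality out of \eqref{e1} by choosing $\xi$ wisely, combine it with the classical fact that the scalar curvature is, up to a fixed positive factor, the average holomorphic sectional curvature, and then push the equality case with a uniqueness argument.

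First I would fix the point $p$, a unitary frame $\{e_i\}$, and write $\Ric_{i\bar j}=\sum_k R_{i\bar j k\bar k}$, $S=\sum_i\Ric_{i\bar i}$, and $H(V)=R(V,\bar V,V,\bar V)$ for a unit vector $V$. Applying \eqref{e1} with $\xi_1=1$ and $\xi_j=0$ for $j\ge 2$ gives $\sum_{j\ge 2}R_{1\bar 1 j\bar j}\ge 0$, that is, $\Ric(V,\bar V)\ge H(V)$ for every unit $V$. More generally, expanding the left side of \eqref{e1} shows it equals $2\,\xi^{T}(D-A)\xi$, where $D=\mathrm{diag}(\Ric_{1\bar 1},\dots,\Ric_{n\bar n})$ and $A=(R_{i\bar i j\bar j})_{ij}$; hence $D-A$ is positive semidefinite, with $\tr(D-A)=S-\sum_i H(e_i)$. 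Summing the inequality $\Ric(e_i,\bar e_i)\ge H(e_i)$ over the frame yields $S\ge\sum_i H(e_i)$ for every unitary frame. I then invoke the standard identity that the average of $\sum_i H(e_i)$ over all unitary frames equals $\tfrac{2}{n+1}S$; choosing a frame on which $\sum_i H(e_i)$ is at least its average gives $S\ge\tfrac{2}{n+1}S$, i.e. $\tfrac{n-1}{n+1}S\ge 0$, whence $S\ge 0$ since $n\ge 2$.

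For the equality statement assume $S=0$. Then $\sum_i H(e_i)\le 0$ for every frame while its frame-average vanishes, so $\sum_i H(e_i)=0$ for every unitary frame; consequently $\tr(D-A)=0$, and $D-A\succeq 0$ forces $D-A=0$. Reading off entries, for every unitary frame one gets $\Ric_{i\bar i}=H(e_i)$ and $R_{i\bar i j\bar j}=0$ for $i\ne j$; the latter says exactly that $R(V,\bar V,W,\bar W)=0$ for all unitary pairs $V,W$. The former gives $\Ric(V,\bar V)=H(V)$ on the unit sphere, hence (both sides being homogeneous of bidegree $(2,2)$ in $(V,\bar V)$) the identity $R(V,\bar V,V,\bar V)=\Ric(V,\bar V)\,|V|^2$. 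Since the holomorphic sectional curvature determines a \K curvature tensor uniquely, comparing with the \K curvature tensor $\tfrac14\big(\Ric_{i\bar j}g_{k\bar l}+\Ric_{i\bar l}g_{k\bar j}+\Ric_{k\bar j}g_{i\bar l}+\Ric_{k\bar l}g_{i\bar j}\big)$, which has the same holomorphic sectional curvature, yields $R_{i\bar j k\bar l}=\tfrac14\big(\Ric_{i\bar j}g_{k\bar l}+\Ric_{i\bar l}g_{k\bar j}+\Ric_{k\bar j}g_{i\bar l}+\Ric_{k\bar l}g_{i\bar j}\big)$. Evaluating this on an orthonormal pair $V,W$ gives $0=R(V,\bar V,W,\bar W)=\tfrac14\big(H(V)+H(W)\big)$, so $H(V)+H(W)=0$, which is the second equality condition. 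Conversely, if $R(V,\bar V,W,\bar W)=0$ and $H(V)+H(W)=0$ for all unitary pairs, then in any frame $S=\sum_i H(e_i)+\sum_{i\ne j}R_{i\bar i j\bar j}=\sum_i H(e_i)$, and $H(e_i)=-H(e_j)$ for $i\ne j$ forces this to vanish (each $H(e_i)=0$ if $n\ge 3$, trivially if $n=2$), so $S=0$. Finally, when $n\ge 3$ the relations $H(e_i)+H(e_j)=0$ in every frame force $H(e_i)=0$ for all $i$, i.e. $H\equiv 0$ at $p$, so by uniqueness $R=0$ at $p$ and $M$ is flat there. The nonpositive case follows by applying the same argument to the algebraic \K curvature tensor $-R$.

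I expect the delicate points to be the passage from $\Ric(V,\bar V)=H(V)$ to the closed form for $R_{i\bar j k\bar l}$, which relies on the standard but nontrivial fact that the holomorphic sectional curvature function determines the \K curvature tensor, together with keeping track of the averaging constant $\tfrac{2}{n+1}$ in the scalar curvature step. If one prefers to avoid the closed form, $H(V)+H(W)=0$ can instead be read off from $R_{i\bar i j\bar j}=0$ by writing this identity in the two rotated frames obtained from $(V\pm W)/\sqrt 2$ and from $(V\pm\ii\,W)/\sqrt 2$ and adding the two results, which cancels all the cross terms.
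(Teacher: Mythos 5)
Your proposal is correct, and it overlaps with the paper's argument in its core mechanism (averaging curvature quantities over unitary frames and using that the scalar curvature is, up to the factor $\tfrac{n(n+1)}{2}$, the average holomorphic sectional curvature) while differing in execution. The paper proves a more general statement (Lemma \ref{l-scalar}, valid for an arbitrary weight matrix $a_{ij}$ with $\sum_{i\neq j}a_{ij}+2\sum_i a_{ii}>0$) by integrating the QOBC quantity directly over $U(n)$ and proving the Berger-type averaging identity from scratch via the rotation identity \eqref{eq-uv1}; you instead test \eqref{e1} only on indicator vectors $\xi$ to get the pointwise inequality $\Ric(V,\bar V)\ge R(V,\bar V,V,\bar V)$, quote Berger's identity as known, and conclude $S\ge\tfrac{2}{n+1}S$ --- shorter, but it uses only a special case of NQOBC for the inequality and leans on the cited identity rather than reproving it. For the equality analysis, your reformulation of NQOBC as positive semidefiniteness of the symmetric matrix $D-A$, combined with vanishing trace, is a clean substitute for the paper's device of differentiating the quadratic polynomial $f(\xi)$ twice, and yields the same conclusion $R_{i\bar i j\bar j}=0$ for $i\neq j$ in every frame. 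Where you genuinely diverge is the step producing $R(V,\bar V,V,\bar V)+R(W,\bar W,W,\bar W)=0$: you invoke the (standard but comparatively heavy) fact that the holomorphic sectional curvature determines a K\"ahler curvature tensor, together with the explicit model tensor built from $\Ric$, whereas the paper gets it for free from the elementary two-frame identity \eqref{eq-uv1}; note that your suggested alternative --- combining the frames $(V\pm W)/\sqrt2$ and $(V\pm\sqrt{-1}\,W)/\sqrt2$ --- is precisely that identity, so the fallback you describe coincides with the paper's route. Your converse direction and the $n\ge3$ flatness conclusion match the paper, and treating the nonpositive case by replacing $R$ with $-R$ is legitimate since the whole argument is pointwise and algebraic. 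The trade-off: the paper's lemma is self-contained and more general (it also gives the strict-positivity refinement used later in Theorem \ref{t-chern}), while your version is leaner but imports Berger's identity and, in the main route, the curvature-determination theorem.
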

The nonnegativity (nonpositivity) of the scalar curvature $S(p)$ in the theorem follows from the following slightly more general result.
\begin{lem}\label{l-scalar} Let $(M^n,g)$ be a \K manifold and $p\in M$.
 \begin{itemize}
   \item [(a)] Suppose there exists an $n\times n$ matrix $a_{ij}$ with the properties: $\sum_{i\neq j}a_{ij}+2\sum_{i}a_{ii}>0$  and {\it for any} unitary frame $\{e_1,\dots,e_n\}$ of $T^{(1,0)}_p(M)$,
\begin{equation}\label{e-weaker1}
\sum_{i,j=1}^n R_{i\bar{i}j\bar{j}} a_{ij} \geq 0.
\end{equation}
Then $S(p)$ is nonnegative. If in addition, there is a unitary frame such that \eqref{e-weaker1} is a strict inequality  then $S(p)>0$.
   \item [(b)] Suppose there exists an $n\times n$ matrix  $a_{ij}$ with the properties: $\sum_{i\neq j}a_{ij}+2\sum_{i}a_{ii}>0$ and {\it for any} unitary frame $\{e_1,\dots,e_n\}$ of $T^{(1,0)}_p(M)$,
\begin{equation}\label{e-weaker2}
\sum_{i,j=1}^n R_{i\bar{i}j\bar{j}} a_{ij} \le 0.
\end{equation}
Then $S(p)$ is nonpositive. If in addition, there is a unitary frame such that \eqref{e-weaker2} is a strict inequality, then $S(p)<0$.
 \end{itemize}
\end{lem}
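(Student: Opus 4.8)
The plan is to average the hypothesis \eqref{e-weaker1} over all unitary frames and exploit the fact that the averaged quantity must be a universal (frame-independent) multiple of the scalar curvature. Concretely, fix an arbitrary unitary frame $\{e_1,\dots,e_n\}$ at $p$ and let $U = (u_{i\a})$ range over the unitary group $U(n)$, acting on the frame by $\tilde e_\a = \sum_i u_{i\a} e_i$. Writing $R_{\a\ba\b\bb}$ for the curvature components in the rotated frame and expanding, one gets $R_{\a\ba\b\bb} = \sum_{i,j,k,l} u_{i\a}\bar u_{j\a} u_{k\b}\bar u_{l\b} R_{i\bar\jmath k\bar l}$. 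Applying \eqref{e-weaker1} with the rotated frame and integrating over $U(n)$ with respect to normalized Haar measure yields
\[
\int_{U(n)} \sum_{\a,\b} R_{\a\ba\b\bb}\, a_{\a\b}\; dU \;\ge\; 0.
\]
The integral on the left is a fixed linear combination of the curvature components $R_{i\bar\jmath k\bar l}$, and by $U(n)$-invariance this combination can only involve the two contractions of the curvature tensor, namely the Ricci tensor $R_{i\bar\jmath}$ and the scalar curvature $S$; moreover since the $a_{\a\b}$ only couple indices in the pattern $R_{\a\ba\b\bb}$, the Ricci term will in fact reduce to $S$ as well. So the left side equals $c\cdot S(p)$ for an explicit constant $c$ depending only on $n$ and the $a_{ij}$.

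The key computation, then, is to identify $c$ and check its sign. Using the standard Schur-orthogonality integrals for $U(n)$ — that is, $\int u_{i\a}\bar u_{j\a}\,dU = \tfrac1n\d_{ij}$ and the analogous formula for products of four entries — one finds that the diagonal terms $a_{\a\a}$ contribute a multiple of $\sum_i R_{i\bar\imath i\bar\imath}$-type averages which work out to $\tfrac{2}{n(n+1)}S$, while the off-diagonal terms $a_{\a\b}$ with $\a\ne\b$ average the mixed components $R_{\a\ba\b\bb}$ to $\tfrac{1}{n(n+1)}S$. Summing, the averaged inequality becomes
\[
\frac{1}{n(n+1)}\left(\sum_{i\ne j} a_{ij} + 2\sum_i a_{ii}\right) S(p) \;\ge\; 0,
\]
and the hypothesis $\sum_{i\ne j}a_{ij} + 2\sum_i a_{ii} > 0$ forces $S(p)\ge 0$. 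For the strict statement in part (a): if \eqref{e-weaker1} is strict for some particular frame, then by continuity it is strict for all frames in a neighborhood of that one in $U(n)$, so the integrand is strictly positive on a set of positive Haar measure (and $\ge 0$ elsewhere), whence the integral — and therefore $S(p)$ — is strictly positive. Part (b) is identical with the inequalities reversed.

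The main obstacle I anticipate is purely bookkeeping: correctly evaluating the fourth-moment integral $\int_{U(n)} u_{i\a}\bar u_{j\a} u_{k\b}\bar u_{l\b}\,dU$ for $\a\ne\b$ and $\a=\b$, keeping track of which index contractions survive, and confirming that no ``second invariant'' (a term built from $\sum |R_{i\bar\jmath}|^2$ or $|\Rm|^2$) can appear — this is automatic here because the averaged expression is linear in the curvature tensor, so only the two linear contractions $R_{i\bar\jmath}$ and $S$ are available, and the symmetry of the index pattern collapses the Ricci contribution into $S$. Once the constant $\tfrac{1}{n(n+1)}\big(\sum_{i\ne j}a_{ij}+2\sum_i a_{ii}\big)$ is pinned down with the right sign, both parts of the lemma follow immediately. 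An alternative to the Haar-integral argument, which avoids the fourth-moment computation, is to test \eqref{e-weaker1} on the specific finite family of frames obtained by permuting the $e_i$ and by the rotations $e_i\mapsto \tfrac{1}{\sqrt2}(e_i+e_j)$, $e_j\mapsto\tfrac{1}{\sqrt2}(e_i-e_j)$ (and their $\ii$-analogues), and to add the resulting inequalities; this also produces $S(p)$ times a positive multiple of $\sum_{i\ne j}a_{ij}+2\sum_i a_{ii}$, and may be the cleaner route to present.
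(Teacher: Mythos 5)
Your proposal is correct and follows essentially the same route as the paper: both average the hypothesis over $U(n)$ against Haar measure and identify the result as $\tfrac{1}{n(n+1)}\bigl(\sum_{i\neq j}a_{ij}+2\sum_i a_{ii}\bigr)S(p)$, with strictness handled by continuity; your claimed frame averages (diagonal $\tfrac{2S}{n(n+1)}$, off-diagonal $\tfrac{S}{n(n+1)}$) are exactly the paper's $K(p)$ and $K(p)/2$. The only difference is bookkeeping: where you propose computing the fourth moments by Schur--Weingarten orthogonality, the paper sidesteps that computation by the rotation identity \eqref{eq-uv1} together with left invariance of Haar measure (which gives $2\int F(ij)=\int G(k)=K(p)$) and then pins down $K(p)$ from $S(p)=\tfrac{n(n+1)}{2}K(p)$.
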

\begin{proof} To prove (a), let $\mathcal{F}$ be the set of all ordered unitary bases in $T^{(1,0)}_p(M)$.  For any frame $\a=\{e_1,\dots,e_n\}\in \mathcal{F}$ and $\mathfrak{u}=(u_{ij})\in U(n)$, we define the frame $$\mathfrak{u}\a:=\lf\{\sum_{j}u_{j1}e_j,\dots,\sum_{j}u_{jn}e_j\ri\}\in \mathcal{F}.$$   In other words, $\mathfrak{u}$ is just the change of basis matrix from $\a$ to $\mathfrak{u}\a$.  Now we choose some frame $\alpha' \in \mathcal{F}$ to be fixed throughout the proof.

 Define functions $F(ij)$ (for any $i,j$) and $G(i)$ (for any $i$) on $U(n)$ by
\begin{equation}
\begin{array}{clcr}
			&F(ij)(\mathfrak{u}):=R(f_i,\bar f_i,f_j,\bar f_j)\\
			&G(i)(\mathfrak{u}):=R(f_i,\bar f_i,f_i,\bar f_i)\\
		\end{array}
\end{equation}
where $\{f_1,\dots,f_n\}=\mathfrak{u}\a'$.  Now we establish the following

\begin{bf} Claim:\end{bf} Let $\mu$ be the left invariant Haar measure on $U(n)$.  Then we  have $2\int_{U(n)}F(ij) (\mathfrak{u})d\nu(\mathfrak{u})=\int_{U(n)} G(k)(\mathfrak{u})d\nu$
for any $i\neq j$ and for any $k$.  In particular, the LHS is independent of $i,j$ and the RHS is independent of $k$.

For $i\neq j$, let $\mathfrak{u_0}(ij)\in U(n)$ be the unitary matrix satisfying: for any $\{f_1,\dots,f_n\}\in \mathcal{F}$, $\{h_1,\dots,h_n\}=\mathfrak{u_0}(ij)\{f_1,\dots,f_n\}$ satisfies
\begin{equation}
\begin{array}{clcr}
			h_i=\frac{ 1+\ii}2f_i+\frac{ 1-\ii}2f_j\\
			h_j=\frac{ 1-\ii}2f_i+\frac{ 1+\ii}2f_j\\
		\end{array}
\end{equation}
and $h_k=f_k$ if $k\neq i, j$.  Also let $\mathfrak{v_0}(ij)\in U(n)$ be the unitary matrix satisfying: for any $\{f_1,\dots,f_n\}\in \mathcal{F}$, $\{h_1,\dots,h_n\}=\mathfrak{v_0}(ij)\{f_1,\dots,f_n\}$ satisfies
\begin{equation}
\begin{array}{clcr}
			h_i=\frac1{\sqrt 2}(f_i+f_j)\\
			h_j=\frac1{\sqrt 2}(f_i-f_j)\\
		\end{array}
\end{equation}
 and $h_k=f_k$ if $k\neq i, j$.

 A straight forward computation gives the following for any $i\neq j$ and $\mathfrak{u}\in U(n)$
\begin{equation}\label{eq-uv1}
  F(ij)(\mathfrak{u})+F(ij)(\mathfrak{u_0}(ij)\mathfrak{u})=\frac12\lf(G(i)(\mathfrak{v_0}(ij)\mathfrak{u})+G(j)(\mathfrak{v_0}(ij)\mathfrak{u})\ri).
\end{equation}
  Thus for $i\neq j$ we have
\begin{equation}\label{eq-integral-1}
\begin{split}
  4\int_{U(n)}F(ij)(\mathfrak{u})d\nu(\mathfrak{u})=&2\int_{U(n)}
  \lf(F(ij)(\mathfrak{u})+F(ij)(\mathfrak{u_0}\mathfrak{u})\ri)d\nu(\mathfrak{u})\\
  =&\int_{U(n)}\lf(G(i)(\mathfrak{v_0}(ij)\mathfrak{u})+G(j)(\mathfrak{v_0}(ij)\mathfrak{u})\ri)d\nu(\mathfrak{u})\\
=&\int_{U(n)}\lf(G(i)(\mathfrak{u})+G(j)(\mathfrak{u})\ri)d\nu(\mathfrak{u}).
\end{split}
\end{equation}
On the other hand, for $i\neq j$ if we let $\mathfrak{w_0}\in U(n)$ be such that for any $\a\in \mathcal{F}$, the frame $\mathfrak{w_0}\a\in \mathcal{F}$ is simply obtained by switching the $i$ and $j$th elements in $\a$, then $G(i)(\mathfrak{u})=G(j)(\mathfrak{w_0}\mathfrak{u})$ and hence
$$
\int_{U(n)}G(i)(\mathfrak{u})d\nu(\mathfrak{u})=\int_{U(n)}G(j)(\mathfrak{u})d\nu(\mathfrak{u}),\
$$ and \eqref{eq-integral-1} thus implies:
\begin{equation}\label{eq-integral-2}
   2\int_{U(n)}F(ij) (\mathfrak{u})d\nu(\mathfrak{u})=\int_{U(n)} G(i)(\mathfrak{u})d\nu(\mathfrak{u}) =\int_{U(n)} G(1)(\mathfrak{u})d\nu(\mathfrak{u})=K(p)
\end{equation}
 for some $K(p)$ depending only on $p$.  In particular, the LHS does not depend on $i, j$.  This establishes the Claim.

 For any $\mathfrak{u}\in U(n)$  we may write

\begin{equation}\label{eq-scalarcurvature-1}
 S(p)=\sum_{i\neq j} F(ij)(\mathfrak{u})+\sum_{i}G(i)(\mathfrak{u}).
\end{equation}

Hence
 \begin{equation}\label{eq-scalarcurvature-2}
 \begin{split}
  S(p)=&\int_{U(n)}\lf(\sum_{i\neq j}F(ij)(\mathfrak{u}) +\sum_{i}G(i)(\mathfrak{u})\ri)d\nu(\mathfrak{u})\\
  =&
   \frac{n(n+1)}2 K(p)
  \end{split}
 \end{equation}
The equality in \eqref{eq-scalarcurvature-2} may be interpreted as saying that on any \K manifold
$S(p)$ is
 either an average of holomorphic sectional curvatures at $p$, a
known
fact from \cite{B} (also see \cite[p189]{Zheng}), or an average of orthogonal holomorphic
bisectional curvatures at $p$.  In particular, if either the holomorphic
sectional curvatures or orthogonal bisectional curvatures are positive
(nonnegative) respectively at $p$, then $S(p)$ is automatically positive
(nonnegative).

Now let  $a_{ij}$ be as in part (a) of the lemma.   Then
\begin{equation}\label{e9}
\begin{split}
  \int_{U(n)}\sum_{i, j=1}^n&F(ij)(\mathfrak{u})a_{ij}d\mu(\mathfrak{u})\\
&= \sum_{i\neq j}a_{ij}\int_{U(n)}F(ij)(\mathfrak{u}) d\mu(\mathfrak{u})+\sum_{i}a_{ii}\int_{U(n)}G(i)(\mathfrak{u}) d\mu(\mathfrak{u})\\
&=\frac {K(p)}2 (\sum_{i\neq j}a_{ij}+2\sum_{i}a_{ii}) .\\
\end{split}
\end{equation}
It follows that if \eqref{e-weaker1} is true for all unitary frames, then $K(p)\ge0$ as $\sum_{i\neq j}a_{ij}+2\sum_{i\neq i}a_{ii}>0$. If in addition \eqref{e-weaker1} is a strict inequality for some unitary frame, then
$$
\sum_{i j}F(ij)(\mathfrak{u})a_{ij}>0
$$ for some $\mathfrak{u}\in U(n)$. By continuity and the fact that \eqref{e-weaker1} is true for all unitary frames, we conclude that $K(p)>0$ and thus $S(p)>0$ by \eqref{eq-scalarcurvature-2}.

The proof of (b) is similar.

\end{proof}

\begin{proof}[Proof of Theorem \ref{t-scalarcurvature}] The first assertion of the theorem follows immediately from Lemma \ref{l-scalar}.   We now prove the other assertions in the Theorem in the case of nonnegative QOBC while the proof nonpositive QOBC is similar.

 We show $S(p)=0$ iff $R(V,\bar V,W,\bar W)=0$ for all unitary pairs $U,V$:  Suppose $R(V,\bar V,W,\bar W)=0$ for all unitary pair.  Then if $K(p)$ is as in the proof Lemma \ref{l-scalar} we have $K(p)=0$, and thus $S(p)=0$ by \eqref{eq-scalarcurvature-2}. 
  Conversely, suppose $S(p)=0$.  By Lemma \ref{l-scalar}(a), \eqref{e1} is then an equality for all
 frames and choices of $\xi$.    Thus for any unitary frame $\{e_1,\dots,e_n\}$, if we let $a_{ij}=R_{i\bar ij\bar j}=a_{ji}$ then the function $f$ of $\xi=(\xi_1,\dots,\xi_n)\in \R^n$ given by
$$
f(\xi)=\sum_{i\neq j}a_{ij}(\xi_i-\xi_j)^2
$$
is identically zero. Hence for all $k\neq l$,
$$
0=\frac{\p^2f}{\p \xi_k\p\xi_l}=a_{kl}+a_{lk}=-2a_{kl}.
$$
Hence $R_{k\bar kl\bar l}=0$. Since $\{e_i\}$ was arbitrary it follows that for any unitary pair $V, W$, $R(V,\bar V,W,\bar W)=0$.
By  \eqref{eq-uv1}, we also have $R(V,\bar V,V,\bar V)+R(W,\bar W,W,\bar W)=0.$

 We now consider the case when $n\geq 3$. If $n\ge3$, then for any unitary pair $V, W$ we can find $U$ such that $U,V,W$ forms a unitary triple.
$$
0=R(U,\bar U,U,\bar U)+  R(W,\bar W,W,\bar W)=-2R(V,\bar V,V,\bar V).
$$
This completes the proof of the Theorem.
\end{proof}

In case $n=2$, the last assertion of the theorem may not be true.  This can be seen by Example 1.2 in \cite{GZ}. Namely, if we let $\Sigma$ be a compact Riemann surface with constant curvature -1 and let $\mathbb{CP}^1$ be the standard sphere. Then $\Sigma\times \mathbb{CP}^1$ has NQOBC. But the scalar curvature is zero everywhere.

\section{Irreducible and reducible manifolds with NQOBC}\label{structure}

We first use Theorem \ref{t-scalarcurvature} to prove that a locally irreducible \K manifold with NQOBC must have positive first Chern class. This generalizes Theorem 2.1 in \cite{GZ}. Note that by Lemma \ref{l-keyfact}, $h^{1,1}(M)=1$ for such a manifold.

\begin{thm}\label{t-chern} Let $(M^n,\omega)$, $n\ge 2$ be compact K\"ahler manifold  with K\"ahler form $\omega$  and with quasi nonnegative OBC at all points. Suppose $h^{1,1}(M)=1$.  Then $c_1(M)=\l[\omega] $ for some $\l> 0$.
 \end{thm}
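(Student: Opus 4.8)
The plan is to combine the structural consequences of NQOBC already established — namely that every harmonic $(1,1)$ form is parallel (Lemma \ref{l-keyfact}), that $h^{1,1}(M)=1$, and that the scalar curvature is nonnegative (Theorem \ref{t-scalarcurvature}) — with the assumption of local irreducibility to force $c_1(M)$ to be a positive multiple of $[\omega]$. First I would observe that since $h^{1,1}(M)=1$, the Ricci form $\rho$, which represents $2\pi c_1(M)$ and is a real harmonic $(1,1)$ form (the manifold being K\"ahler and compact), must be cohomologous to $\lambda[\omega]$ for some real constant $\lambda$; by Lemma \ref{l-keyfact} the harmonic representative of this class is parallel, so $\rho$ differs from a parallel form by $dd^c$ of a function. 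It then remains to pin down the sign of $\lambda$ and to rule out $\lambda = 0$.

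For the sign, I would integrate: $\lambda \, [\omega]^n[M] = \int_M \rho \wedge \omega^{n-1} = \frac{1}{n}\int_M S\, \omega^n$ up to a positive dimensional constant, where $S$ is the scalar curvature. By Theorem \ref{t-scalarcurvature} we have $S \geq 0$ pointwise (this uses $n \geq 2$ for the nonnegativity statement), and since $[\omega]^n[M] = \vol_\omega(M) > 0$, we get $\lambda \geq 0$. The remaining task is to exclude $\lambda = 0$. If $\lambda = 0$ then $\int_M S\,\omega^n = 0$, so $S \equiv 0$ by nonnegativity; Theorem \ref{t-scalarcurvature} then asserts (for $n \geq 3$) that $(M,g)$ is flat everywhere, contradicting local irreducibility, since a flat manifold of complex dimension $\geq 2$ is locally reducible (locally it splits as a product of flat factors). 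For $n = 2$, flatness does not immediately follow from $S \equiv 0$, so I would argue separately: when $n = 2$, $S \equiv 0$ together with the equality case of Theorem \ref{t-scalarcurvature} gives $R(V,\bar V,W,\bar W) = 0$ for all unitary pairs, which by the complex-surface identification means nonnegative (in fact vanishing) orthogonal bisectional curvature; one then invokes the classification of compact K\"ahler surfaces with vanishing orthogonal bisectional curvature (or directly, since $\rho$ is then exact and parallel hence zero, $(M,g)$ is Ricci-flat K\"ahler, and a Ricci-flat K\"ahler surface with $h^{1,1}=1$ is forced to be locally reducible or a torus, again contradicting irreducibility). Thus $\lambda > 0$, and $c_1(M) = \frac{\lambda}{2\pi}[\omega]$ is a positive class; rescaling $[\omega]$ absorbs the constant.

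The main obstacle I anticipate is the careful treatment of the degenerate case $\lambda = 0$, particularly separating the $n \geq 3$ argument (clean, via the flatness statement of Theorem \ref{t-scalarcurvature}) from the $n = 2$ case (where one must appeal to surface geometry, e.g. that an irreducible compact Ricci-flat K\"ahler surface is a $K3$ surface and hence has $h^{1,1} = 20 \neq 1$, giving the contradiction). A secondary point requiring care is the bookkeeping of positive constants relating $\rho$, $c_1$, $S$, and the volume, and confirming that "quasi nonnegative OBC" in the hypothesis is exactly the NQOBC condition of Definition \ref{def-QOBC} so that Theorem \ref{t-scalarcurvature} applies verbatim at every point.
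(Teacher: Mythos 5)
Your outline follows the paper's proof in its three essential steps: $h^{1,1}(M)=1$ gives $c_1(M)=\lambda[\omega]$; integrating the Ricci form against $\omega^{n-1}$ and using $S\ge 0$ from Theorem \ref{t-scalarcurvature} gives $\lambda\ge 0$; and $\lambda=0$ forces $S\equiv 0$, which is then excluded via the equality/flatness part of Theorem \ref{t-scalarcurvature}. The one genuine discrepancy is how you close the case $\lambda=0$: you derive a contradiction with \emph{local irreducibility}, but that is not a hypothesis of the statement you were asked to prove --- the stated hypothesis is $h^{1,1}(M)=1$ (local irreducibility implies $h^{1,1}(M)=1$ via Lemma \ref{l-keyfact}, not conversely; the introduction's formulation is the irreducible one, but the theorem itself is stated with $h^{1,1}(M)=1$). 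The paper, for $n\ge 3$, concludes that $M$ is flat and rejects this as incompatible with $h^{1,1}(M)=1$. As written, your $n\ge 3$ argument does not reach a contradiction from the stated hypotheses; you would need to argue (as the paper implicitly does) that a compact flat K\"ahler manifold cannot have $h^{1,1}=1$, rather than appeal to irreducibility.

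For $n=2$ your route differs from the paper's and uses heavier machinery: you pass to Ricci-flatness (note that to speak of the Ricci form being harmonic/parallel you should first observe that $S\equiv 0$ is constant, so the Ricci form is harmonic; exactness then already forces it to vanish, without needing Lemma \ref{l-keyfact}) and then invoke the classification of compact Ricci-flat K\"ahler surfaces. That route does in fact work with $h^{1,1}(M)=1$ alone, since none of the possibilities (torus, bielliptic, Enriques, K3) has $h^{1,1}=1$, so your extra appeal to irreducibility and K3 is unnecessary there. The paper's $n=2$ argument is more elementary: diagonalizing $\Ric$ with eigenvalues $a_1=-a_2$ (from $S\equiv 0$) gives pointwise $\Ric\wedge\Ric=-2a_1^2\,dz^1\wedge d\bar z^1\wedge dz^2\wedge d\bar z^2\le 0$, while $\int_M \Ric\wedge\Ric$ is a positive multiple of $c_1^2[M]=\lambda^2\int_M\omega^2=0$, forcing Ricci-flatness; combined with $R_{1\bar 1 2\bar 2}=0$ from the equality case of Theorem \ref{t-scalarcurvature} this yields pointwise flatness, and the same contradiction with $h^{1,1}(M)=1$ as before (the paper remarks this argument works for all $n\ge 2$). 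Your bookkeeping of constants and your reading of ``quasi nonnegative OBC'' as the NQOBC condition are both fine.
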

 \begin{proof} Since $h^{1,1}(M)=1$ we have $c_1(M)=\l[\omega]$ for some $\l$. By Theorem \ref{t-scalarcurvature}, the scalar curvature $S$ of $M$ is nonnegative.
 Hence
 $$
 0\le \int_{M}S\omega^m=\int_{M}\Ric\wedge \omega^{n-1}=\l\, \vol(M)
 $$
 and thus $\l\ge0$. We now show that $l>0$.  Suppose otherwise, and that $\l=0$ and thus $S=0$ everywhere.

  Suppose $n\ge 3$.  Then Theorem  \ref{t-scalarcurvature} implies $M$ is flat which is impossible since $h^{1,1}(M)=1$.
Suppose $n=2$.  At any point we may choose local coordinates $z^i$ such that  $\{\frac{\p}{\p z^i}\}$ are unitary and eigenvectors of $\Ric$. Let $a_i:=R_{i\bar i}$.  Then as $S=0$ we have $a_1=-a_2$ and we further calculate
 \be
 \begin{split}
  \Ric\wedge \Ric=&\lf(\sum_{i=1}^2 a_idz^i\wedge d\bar z^i \ri)\wedge\lf(\sum_{j=1}^2 a_jdz^j\wedge d\bar z^j \ri)\\
  =&2a_1a_2 dz^1\wedge d\bar z^1\wedge dz^2\wedge d\bar z^2 \\
  =&-2a_1^2dz^1\wedge d\bar z^1\wedge dz^2\wedge d\bar z^2.
  \end{split}
  \ee
Since $\l=0$,
$$
\int_{M}\Ric^2=0
$$
which implies $a_1=a_2=0$ everywhere.  On the other hand, Theorem  \ref{t-scalarcurvature} implies $R_{1\bar 12\bar2}=0$ and thus $0=a_i=R_{i\bar i}=R_{i\bar ii\bar i}$ for $i=1, 2$.    Thus $M$ is flat which is impossible is impossible since $h^{1,1}(M)=1.$
This concludes the proof of the Theorem by contradiction.  The above argument for $n=2$ can actually be used for all $n\geq 2$.
 \end{proof}

Next we want to study  the case when $(M, g)$ is possibly reducible.  We will obtain a partial classification of the possible deRham factorizations of the universal cover of $M$ (Theorem \ref{t-reduce}).   Let $\tM$ be the universal cover of $M$ with covering map $\pi$. Let $$\tM=\tM_0\times\tM_1\times\cdots \times \tM_k$$ be the deRham decomposition of $\tM$ where $\tM_0$ is Euclidean with possible zero dimension  and $\tM_i$, $i\ge 1$  (with positive dimension) are irreducible non-flat factors. Then the product of any subcollection of $\tilde M_i$'s still has NQOBC provided the product has dimension at least 2.  By Lemma \ref{l-keyfact},  $c_1(M)$ can be represented by a parallel harmonic (1,1) form $\eta$ so that $\Ric=\eta+\ii\p\bar\p f$ for some smooth function $f$ on $M$.  Then this pullls back to $\t M$ to give
\begin{equation}\label{eq-Ric}
   \wt \Ric=\wt \eta+\ii\p\bar\p \t f
\end{equation}
where $\t f=\pi^*f$ and $\wt \eta=\pi^* \eta$.  Moreover, by Corollary \ref{c1} $\wt \eta$ has the form
$$\wt \eta =\wt\eta_0\times\l_1\tilde \omega_1\times\dots \cdots \times \l_k\tilde\omega_k$$ where $\tilde \omega_i$ is the K\"ahler form of $\tilde M_i$, $\l_i$ are constants and $\wt\eta_0$ is parallel in $\tM_0$.  In the following for a \K manifold $(N,h)$ we use the notation $\Delta_Nu=h^{\ijb}u_{\ijb}$.
\begin{lem}\label{l-irr-1} We have $\wt\eta_0=0$.  Also, if $i\ge1$ and $\dim \tM_i\ge2$ then $\l_i>0$.
\end{lem}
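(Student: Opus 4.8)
The plan is to exploit equation \eqref{eq-Ric} on each flat or low-dimensional factor by integrating against a suitable volume form on $\tM$. For the statement $\wt\eta_0=0$: restrict attention to the flat factor $\tM_0$. Since $\tM_0$ is flat, the $\tM_0$-component of $\wt\Ric$ vanishes identically, so \eqref{eq-Ric} restricted to directions in $\tM_0$ reads $0=\wt\eta_0+\ii\p\bar\p\t f$ along $\tM_0$; equivalently, $\Delta_{\tM_0}\t f = -\tr_{\tM_0}\wt\eta_0 =: -c$, where $c$ is the constant trace of the parallel form $\wt\eta_0$. Now I would descend back to the compact quotient: this trace relation is $\pi_1(M)$-invariant and, because the function $f$ on $M$ is smooth and $M$ is compact, the partial Laplacian of $\t f$ in the $\tM_0$-directions must average to zero against $\omega^n$ on $M$ (integrate $\Delta_{\tM_0}f \cdot \omega^n$ over $M$; this is the integral of $\ii\p\bar\p f \wedge (\text{parallel }(n-1,1)\text{-form projecting onto }\tM_0\text{-directions})$, which vanishes by Stokes since the parallel form is closed). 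Hence $c=0$. A parallel $(1,1)$-form on Euclidean space with zero trace need not itself vanish, so this alone is not enough; I would then refine by applying Theorem \ref{t-scalarcurvature} or Lemma \ref{l-keyfact} to rule out a nonzero parallel traceless piece — more precisely, $\wt\eta_0$ represents (a component of) $c_1$ of a flat factor, which has trivial $c_1$, so $\wt\eta_0$ is exact and parallel, hence zero. This last point is the cleanest route: on the flat torus-like factor, a parallel exact form is zero.

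For the second claim, fix $i\ge 1$ with $\dim\tM_i\ge 2$. The number $\l_i$ is the constant such that the $\tM_i$-component of $\wt\eta$ equals $\l_i\wt\omega_i$. Restricting \eqref{eq-Ric} to $\tM_i$-directions and tracing with respect to $\tg_i$ gives $S_{\tM_i} = n_i\l_i + \Delta_{\tM_i}\t f$ where $n_i=\dim\tM_i$ and $S_{\tM_i}$ is (the $\tM_i$-part of) the scalar curvature. Since $\dim\tM_i\ge 2$, the product $\tM_i$ by itself has NQOBC by the remark preceding the lemma, so by Theorem \ref{t-scalarcurvature} its scalar curvature $S_{\tM_i}$ is nonnegative everywhere, and $S_{\tM_i}\equiv 0$ would force $\tM_i$ flat (here I use $n_i\ge 2$; if $n_i=2$ one uses instead the argument with $\Ric\wedge\Ric$ as in the proof of Theorem \ref{t-chern}, or simply notes irreducibility and nonflatness of $\tM_i$ directly contradict $S_{\tM_i}\equiv 0$ via that same argument). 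Now integrate $S_{\tM_i}$ over the compact $M$ against $\omega^n$: the $\Delta_{\tM_i}\t f$ term integrates to zero by Stokes as above, so
\[
0 \le \int_M S_{\tM_i}\,\omega^n = n_i\,\l_i\,\vol_g(M),
\]
giving $\l_i\ge 0$. If $\l_i=0$ then $S_{\tM_i}$ integrates to zero while being pointwise nonnegative, hence $S_{\tM_i}\equiv 0$, which by Theorem \ref{t-scalarcurvature} (using $n_i\ge 2$ and the surface-case argument when $n_i=2$) forces $\tM_i$ to be flat, contradicting that $\tM_i$ is a nonflat irreducible factor. Therefore $\l_i>0$.

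The main obstacle is the bookkeeping of which ``scalar curvature'' and ``Laplacian'' appear when one restricts the globally defined identity \eqref{eq-Ric} to a single de Rham factor, and justifying that the cross terms from the function $\t f$ in the directions transverse to $\tM_i$ do not contaminate the trace identity — this works because the de Rham decomposition is a metric product, so $\Delta_{\tM_i}$ and $\Delta_{\tM_j}$ commute and the Hessian of $\t f$ is block-respecting only after tracing, not pointwise; one must take the trace in the $\tM_i$-block only, and then the descent to $M$ and the Stokes argument kill the remaining $\t f$ term. A secondary subtlety is the $n_i=2$ boundary case of Theorem \ref{t-scalarcurvature}, for which I would invoke the $\Ric\wedge\Ric$ computation already used in the proof of Theorem \ref{t-chern} rather than the flatness conclusion of Theorem \ref{t-scalarcurvature} directly.
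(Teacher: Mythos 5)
Your overall strategy (integrate the traced identity \eqref{eq-Ric} over the compact $M$ and use Stokes with a parallel form) is genuinely different from the paper's slice-plus-maximum-principle argument, but as written it has two gaps. First, for $\wt\eta_0=0$: your integration only yields $\mathrm{tr}\,\wt\eta_0=0$, and the proposed way of killing the traceless part does not work. The flat de Rham factor $\tM_0$ is simply connected and complete, hence biholomorphically isometric to $\mathbb{C}^m$ --- it is not a ``flat torus-like factor'' --- and on $\mathbb{C}^m$ a parallel exact $(1,1)$-form need not vanish (the Euclidean K\"ahler form is $\ii\p\bar\p|z|^2$); likewise ``$c_1$ of the flat factor is trivial'' carries no information since $H^2(\mathbb{C}^m;\R)=0$. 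The paper instead restricts to the slice $\tM_0\times\{(\tilde p_1,\dots,\tilde p_k)\}$ through a maximum point of $f$: there $0=\wt\eta_0+\ii\p\bar\p \tilde f_0$ with $\tilde f_0$ bounded, so $\Delta_{\tM_0}\tilde f_0$ is a constant, boundedness forces $\tilde f_0$ to be constant, and then $\wt\eta_0=-\ii\p\bar\p\tilde f_0=0$ --- one gets the whole form, not merely its trace. Your scheme could be salvaged by running the same boundedness/Liouville argument componentwise along eigendirections of $\wt\eta_0$, but the step you actually propose is false.

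Second, for $\lambda_i>0$ when $\dim\tM_i\ge2$: (i) your Stokes argument tacitly assumes that $S_{\tM_i}$, $\Delta_{\tM_i}\tilde f$ and the parallel ``partial trace'' form attached to the single factor $i$ descend to $M$. Deck transformations preserve the de Rham splitting only up to permutation of the nonflat factors, so these objects are in general not $\pi_1(M)$-invariant (only the flat distribution is canonically preserved, which is why the analogous claim is unproblematic for $\tM_0$); you would need to sum over a deck-group orbit of factors to make the integral well defined. (ii) In the borderline case $\lambda_i=0$, $\dim\tM_i=2$, the conclusion fails as you set it up: $S_{\tM_i}\equiv0$ together with NQOBC does \emph{not} imply flatness in complex dimension $2$ (the paper's own example $\Sigma\times\mathbb{CP}^1$ has NQOBC and identically zero scalar curvature), and your substitutes do not close this: the $\Ric\wedge\Ric$ computation from the proof of Theorem \ref{t-chern} needs a compact manifold on which $\int\Ric^2=0$, while compactness of $\tM_i$ is not known at this stage, and ``irreducibility and nonflatness directly contradict it'' is an assertion, not an argument. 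The paper avoids this by applying the strong maximum principle to $h(\tilde q)=\tilde f(\tilde p_0,\tilde q,\dots,\tilde p_k)$ at the maximum point of $f$, which first gives that the factor is Ricci-flat; Ricci-flatness combined with the vanishing of orthogonal bisectional curvatures from Theorem \ref{t-scalarcurvature} then yields zero holomorphic sectional curvature, hence flatness, in every dimension $\ge2$.
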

\begin{proof} Suppose $\l_1\le 0$ and $\dim \tM_1\ge2$ say.  Let $$\t p=(\t p_0,\t p_1,\dots,\t p_k)\in \tM_0\times\tM_1\times\cdots \times \tM_k=\tM $$
be such that $f(\pi \t p)=\max_M f$. Consider the function $h(\t q)=\t f (\t p_0,\t q,\dots,\t p_k)$ on $\tM_1$ for $\t q\in \tM_1$. Then $h(\t p_1)=\max_{\t M_1} h$. On the other hand, $\wt\Ric_{(1)}=\l_1\t\omega_1+\ii \p\bar\p h$. By Theorem \ref{t-scalarcurvature}, the scalar curvature of $\tM_1$ is nonnegative because $\dim \tM_1\ge2$ and we conclude that
$$
\Delta_{\tM_1} h\ge 0.
$$
By the strong maximum principle, it follows that $h$ is constant and thus $\wt\Ric_{(1)}=\l_1\omega_1$. By   Theorem \ref{t-scalarcurvature} again  we have $\l_1\ge0$, and so $\l_1=0$ and $\wt\Ric_{(1)}=0$. In particular, the scalar curvature of $\tM_1$ is zero. By Theorem \ref{t-scalarcurvature} and the fact that $\wt\Ric^{(1)}=0$,  we conclude that $\tM_1$ in fact has zero holomorphic sectional curvature everywhere.   This contradicts the fact that $\tM_1$ is nonflat.

To prove that $\wt \eta_0=0$, first note that for the function $\t f_0 (\t q)=\t f(\t q,\t p_1,\dots,\t p_k)$ on $\tM_0$ with $\t p_i$ being fixed:
$$
\Delta_{\tM_0}\t f_0=C
$$
for some constant because $\tM_0$ is flat and $\wt \eta_0$ is parallel. As before, we may conclude that $\t f_0$ is constant. Hence $\wt\eta_0=0$.
\end{proof}

\begin{lem}\label{l-irr-2} Suppose $M$ has NQOBC at $p$ and $M=M_1\times M_2$ with $\dim(M_1)=1$. Let $e_1,\dots,e_n$ be a unitary frame at $p$ such that $e_1$ is tangent to $M_1$ and $e_2,\dots,e_n$ are tangent to $M_2$. Then
$$
\sum_{j=2}^nR(e_2,\bar e_2,e_j,\bar e_j)\ge-R(e_1,\bar e_1,e_1,\bar e_1).
$$
\end{lem}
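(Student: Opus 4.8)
The plan is to apply the NQOBC inequality \eqref{e1} at $p$ to a rotation of the given frame adapted to the product structure, choosing the real parameters $\xi$ so that the resulting quadratic form collapses onto exactly the combination appearing in the claim.

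First I would record what the splitting $M=M_1\times M_2$ gives at $p$: since a Riemannian product has curvature tensor equal to the direct sum of the curvatures of its factors, every component of $R$ that mixes a direction tangent to $M_1$ with a direction tangent to $M_2$ vanishes. As $\dim M_1=1$, the $M_1$-part of any $(1,0)$-vector is a multiple of $e_1$, while $e_2,\dots,e_n$ lie in $M_2$. Now set $f_1=\tfrac1{\sqrt2}(e_1+e_2)$, $f_2=\tfrac1{\sqrt2}(e_1-e_2)$ and $f_j=e_j$ for $j\ge 3$; using $e_1\perp e_2$ and $e_1,e_2\perp e_3,\dots,e_n$ one checks this is again a unitary frame. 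Expanding $R$ multilinearly and dropping the mixed components, a short computation gives
$$R(f_1,\bar f_1,f_2,\bar f_2)=\tfrac14\Big(R(e_1,\bar e_1,e_1,\bar e_1)+R(e_2,\bar e_2,e_2,\bar e_2)\Big),$$
$$R(f_1,\bar f_1,e_j,\bar e_j)=R(f_2,\bar f_2,e_j,\bar e_j)=\tfrac12\,R(e_2,\bar e_2,e_j,\bar e_j)\qquad(j\ge 3),$$
while $R(f_i,\bar f_i,f_j,\bar f_j)=R(e_i,\bar e_i,e_j,\bar e_j)$ for $i,j\ge 3$.

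Finally I would feed the frame $\{f_1,\dots,f_n\}$ and the choice $\xi=(1,-1,0,\dots,0)$ into \eqref{e1}. For this $\xi$ one has $(\xi_1-\xi_2)^2=4$, $(\xi_1-\xi_j)^2=(\xi_2-\xi_j)^2=1$ for $j\ge 3$, and $(\xi_j-\xi_k)^2=0$ for $j,k\ge 3$, so using $R(X,\bar X,Y,\bar Y)=R(Y,\bar Y,X,\bar X)$ the inequality reduces to
$$0\le 2\Big(R(e_1,\bar e_1,e_1,\bar e_1)+R(e_2,\bar e_2,e_2,\bar e_2)\Big)+2\sum_{j=3}^n R(e_2,\bar e_2,e_j,\bar e_j),$$
which upon dividing by $2$ is precisely the asserted bound (absorbing the $j=2$ term into the sum). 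The only point demanding care is the bookkeeping: the factor $\tfrac12$ produced when the $\tfrac1{\sqrt2}$-normalization enters the curvature quadratically, and the correct pairing of the symmetric off-diagonal terms $R(f_i,\bar f_i,f_j,\bar f_j)$ and $R(f_j,\bar f_j,f_i,\bar f_i)$ in the double sum. There is no analytic difficulty beyond this.
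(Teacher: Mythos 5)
Your proof is correct and follows essentially the same route as the paper: the same rotated frame $f_1=\tfrac1{\sqrt2}(e_1+e_2)$, $f_2=\tfrac1{\sqrt2}(e_1-e_2)$, $f_j=e_j$, the same weights $\xi=(1,-1,0,\dots,0)$ in \eqref{e1}, and the same use of the product structure to kill mixed curvature terms. The only cosmetic difference is that you keep the full symmetric double sum (hence the overall factor $2$) where the paper sums over $i<j$, and this bookkeeping is handled correctly.
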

\begin{proof} Let $f_1=\frac{e_1+e_2}{\sqrt 2}$, $f_2=\frac{e_1-e_2}{\sqrt 2}$, $f_j=e_j$ for $j\ge 3$. Then
$$
R(f_1,\bar f_1,f_2,\bar f_2)=\frac14\lf(R(e_1,\bar e_1,e_1,\bar e_1)+R(e_2,\bar e_2,e_2,\bar e_2)\ri),
$$
and for $j\ge 3$
$$
R(f_1,\bar f_1, f_j,\bar f_j)=\frac12 R(e_2,\bar e_2,e_j,\bar e_j)=R(f_2,\bar f_2, f_j,\bar f_j).
$$
Let $\xi_1=1$, $\xi_2=-1$ and $\xi_j=0$ for $j\ge 3$, then
\be
\begin{split}
0\le &\sum_{i<j}R(f_i,\bar f_i,f_j,\bar f_j)(\xi_i-\xi_j)^2\\
=&4R(f_1,\bar f_1,f_2,\bar f_2)+\sum_{j=3}^nR(f_1,\bar f_1, f_j,\bar f_j)+\sum_{j=3}^nR(f_2,\bar f_2, f_j,\bar f_j)\\
=&R(e_1,\bar e_1,e_1,\bar e_1)+R(e_2,\bar e_2,e_2,\bar e_2)+\sum_{j=3}^nR(e_2,\bar e_2,e_j,\bar e_j).
\end{split}
\ee
From this the result follows.
\end{proof}

\begin{thm}\label{t-reduce} We have the following mutually exclusive cases:

\begin{itemize}
  \item [(i)]  $\dim\tM_0\ge1$ and for each $i\ge 1$: $\tM_i$ is compact with $\l_i>0$ and quasi positive Ricci curvature, i.e., the Ricci curvature of $\tM_i$ is nonnegative and is positive at some point. In particular, if $\dim\tM_i=1$ then $\tM_i$ is biholomorphic to $ \mathbb{CP}^1$.
  \item [(ii)] $\dim\tM_0=0$ and for each $i\ge 1$: $\tM_i$ is compact with $\l_i>0$.  In particular, if $\dim\tM_i=1$ then $\tM_i$ is biholomorphic to $ \mathbb{CP}^1$ and all other factors have nonnegative (positive, respectively) Ricci curvature provided $\tM_i$ has nonpositive (negative, respectively) Gauss curvature somewhere.
  \item [(iii)] $\dim\tM_0=0$ and for some $i\ge 1$: $\tM_i$ is biholomorphic either to $\C$ with $\l_i=0$ or to the unit disk in $\C$ with $\l_i<0$, and all other factors are compact and have $\l_j>0$ with  Ricci curvature bounded below by a positive constant.
\end{itemize}
\end{thm}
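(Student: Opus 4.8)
The plan is to build on the reductions already in place before the theorem, using Theorem~\ref{t-chern} for the compact factors and Lemma~\ref{l-irr-2} for the Ricci estimates behind the quasi‑positivity assertions; the one genuinely new input needed is that a non‑flat de~Rham factor of $\tM$ is compact unless it is $1$‑dimensional. First I record the consequences of the set‑up. Since $\wt\Ric$ and $\wt\eta=\wt\eta_0\times\lambda_1\tilde\omega_1\times\cdots\times\lambda_k\tilde\omega_k$ are both block diagonal for $\tM=\tM_0\times\cdots\times\tM_k$, so is $\ii\partial\bar\partial\tilde f=\wt\Ric-\wt\eta$; restricting $\tilde f$ to a slice $\tM_i\times\{q\}$ ($q$ fixed in the other factors) yields, for each $i\ge1$, a \emph{bounded} function $u_i$ on $\tM_i$ (bounded since $f$ lives on the compact $M$) with $\Ric_{\tM_i}=\lambda_i\tilde\omega_i+\ii\partial\bar\partial u_i$. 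Also $\tM$, hence each $\tM_i$, is complete with bounded geometry. By Lemma~\ref{l-irr-1}, $\wt\eta_0=0$ and $\lambda_i>0$ whenever $\dim\tM_i\ge2$. Finally, if $\tM_i$ is compact it is irreducible with $h^{1,1}(\tM_i)=1$ (Lemma~\ref{l-keyfact}) and inherits NQOBC, so Theorem~\ref{t-chern} gives $c_1(\tM_i)=\lambda_i[\tilde\omega_i]$ with $\lambda_i>0$ — the assertion that every compact factor has positive first Chern class.

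Next I would treat the non‑compact non‑flat factors. If $\tM_i$ is non‑compact with $\dim\tM_i\ge2$, then $\lambda_i>0$ and $\Ric_{\tM_i}-\ii\partial\bar\partial u_i=\lambda_i\tilde\omega_i$ with $u_i$ bounded; a Myers‑type estimate — the second variation argument along geodesics, with the extra boundary term bounded by $\sup|u_i|$ and the lower order term controlled by the bounded geometry of $\tM_i$ — forces $\tM_i$ to be compact, a contradiction. So a non‑compact non‑flat factor is $1$‑dimensional, hence (complete, simply connected, non‑flat surface) biholomorphic to $\C$ or to the unit disk $\mathbb D$. The same Myers estimate in dimension one excludes $\lambda_i>0$, so $\lambda_i\le0$; combining this with the trace identity $K_i=\lambda_i+\Delta_{\tM_i}u_i$ ($u_i$ bounded) and the parabolicity of a complete surface with $K\ge0$ (on which bounded plurisubharmonic functions are constant) gives $\inf_{\tM_i}K_i<0$, with $\lambda_i=0$ in the $\C$ case and $\lambda_i<0$ in the $\mathbb D$ case.

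The trichotomy and the Ricci signs then come from Lemma~\ref{l-irr-2}: for any $1$‑dimensional de~Rham factor $\tM_\ell$ of $\tM$, applying Lemma~\ref{l-irr-2} with $M_1=\tM_\ell$ and using that bisectional curvatures between distinct factors vanish gives $\Ric_{\tM_j}(v,\bar v)\ge-K_\ell(q)$ for every other factor $\tM_j$, every unit $v\in T^{(1,0)}(\tM_j)$ and every $q\in\tM_\ell$; so $\Ric_{\tM_j}\ge-(\inf_{\tM_\ell}K_\ell)\,g_j$. If $\dim\tM_0\ge1$, taking $\tM_\ell$ a coordinate line of $\tM_0$ ($K_\ell\equiv0$) makes $\Ric\ge0$ on every factor; a non‑compact non‑flat factor would then be a parabolic surface with $K\ge0$ whose bounded plurisubharmonic potential $u_i$ (here $\ii\partial\bar\partial u_i=\Ric_{\tM_i}-\lambda_i\tilde\omega_i\ge0$ as $\lambda_i\le0$) is constant, forcing flatness — impossible; so all $\tM_i$ are compact, with $\lambda_i>0$ and $\Ric_{\tM_i}\ge0$ not identically $0$ (else $\lambda_i=0$), i.e.\ quasi‑positive, and a $1$‑dimensional such factor is $\mathbb{CP}^1$ by Gauss--Bonnet — case (i). If $\dim\tM_0=0$ and all $\tM_i$ are compact, we are in case (ii), with $\lambda_i>0$ and the $1$‑dimensional refinement coming from the Ricci bound above. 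If $\dim\tM_0=0$ and some $\tM_i$ is non‑compact, it is a $1$‑dimensional $\C$ ($\lambda_i=0$) or $\mathbb D$ ($\lambda_i<0$) with $\inf_{\tM_i}K_i<0$; there cannot be two such, since the Ricci bound with $\tM_\ell$ one of them either makes the other compact by Myers or gives $K_\ell\ge0$ and hence flatness; and the remaining factors are compact with $\lambda_j>0$ and, by the Ricci bound with $\tM_\ell=\tM_i$, with Ricci bounded below by a positive constant — case (iii). The three cases are plainly mutually exclusive.

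The hard part is the compactness claim in the second paragraph: deducing compactness of $\tM_i$ from $\Ric_{\tM_i}-\ii\partial\bar\partial u_i=\lambda_i\tilde\omega_i$ with $u_i$ bounded, $\lambda_i>0$, using only the bounded geometry $\tM_i$ inherits from $M$. If this proves delicate, the alternative is to import the rough product structure — finitely many compact factors, one flat factor, and at most one further factor, which is $1$‑dimensional — from the structure theory of \cite{GZ}, and to use the arguments above only for the refinement. The remaining tools (uniformization, parabolicity of surfaces with $K\ge0$, the maximum principle for bounded plurisubharmonic functions, Gauss--Bonnet, Myers' theorem) are standard.
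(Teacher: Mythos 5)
There is a genuine gap, and it sits exactly where you flagged it: the claim that a non-compact non-flat de~Rham factor must be $1$-dimensional (and, in dimension one, must have $\lambda_i\le 0$). Your proposed ``Myers-type estimate'' from $\Ric_{\tM_i}=\lambda_i\tilde\omega_i+\ii\p\bar\p u_i$ with $u_i$ bounded does not close: along a unit-speed minimizing geodesic $\gamma$, the complex Hessian term evaluated on $\gamma'$ is $\tfrac12\bigl(\nabla^2u_i(\gamma',\gamma')+\nabla^2u_i(J\gamma',J\gamma')\bigr)$; the first summand is $(u_i\circ\gamma)''$ and can indeed be integrated by parts at a cost of $\sup|u_i|$, but the $J\gamma'$-summand is not a derivative along $\gamma$ and is not controlled by $\sup|u_i|$ or by bounded geometry, so the second-variation (Bakry--\'Emery type) argument breaks down -- this is precisely why non-compact shrinking solitons are not excluded by such reasoning. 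The fallback of importing the product structure from \cite{GZ} is also not available: that structure theory is proved under nonnegative orthogonal bisectional curvature, which is strictly stronger than NQOBC (the whole point of the paper), so it cannot be quoted here. Without this compactness input your trichotomy does not close (nothing in your argument rules out a non-compact factor of dimension $\ge 2$ with $\lambda_i>0$, which fits none of (i)--(iii)), and your case (i) tacitly uses the same unproved claim for factors of dimension $\ge2$.

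The paper avoids the bounded-potential Myers issue by organizing the cases differently: by $\dim\tM_0\ge1$, by $\dim\tM_0=0$ with all $\lambda_i>0$, and by $\dim\tM_0=0$ with some $\lambda_i\le0$. When $\dim\tM_0\ge1$, Lemma \ref{l-irr-2} applied with a flat line gives $\Ric\ge0$ on all of $M$, and the Cheeger--Gromoll structure theorem \cite{CheegerGromoll} for compact manifolds of nonnegative Ricci curvature forces every non-flat factor of $\tM$ to be compact; quasi-positivity then comes from evaluating $\wt\Ric\ge\wt\eta$ at a minimum point of $f$. When $\dim\tM_0=0$ and all $\lambda_i>0$, one does not work factor by factor at all: Yau's theorem \cite{Yau1978} on the compact quotient $M$ produces a K\"ahler metric with Ricci form exactly $\eta$; since its potential lives on compact $M$, the pulled-back metric on $\tM$ is uniformly equivalent to $\tilde\omega$, its Ricci form $\wt\eta\ge(\min_i\lambda_i)\tilde\omega$ is bounded below by a positive multiple of the new metric, and the classical Myers theorem makes the whole of $\tM$ compact. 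When some $\lambda_i\le0$, Lemma \ref{l-irr-1} forces that factor to be $1$-dimensional, and the conformal metric $e^{2(\lambda+h)}|dz|^2$ (complete because $h$ is bounded) has constant curvature $\lambda_i$, which identifies $\C$ versus the disk and would also rule out $\lambda_i>0$ for a non-compact $1$-dimensional factor -- a cleaner substitute for your one-dimensional Myers step; negativity of the Gauss curvature somewhere together with Lemma \ref{l-irr-2} and Myers then handles the remaining factors, as in your third paragraph. So your endgame (Lemma \ref{l-irr-2} and the resulting Ricci bounds, parabolicity for the surface factor) matches the paper, but the compactness step must be replaced by the Cheeger--Gromoll and Yau-plus-Myers arguments rather than a bounded-potential Myers estimate or an appeal to \cite{GZ}.
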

\begin{proof}   If $\dim\tM_0\ge1$, by Lemma \ref{l-irr-2} and the fact that $\tM_0$ is flat, we conclude that the Ricci curvature of $\tM_i$ is nonnegative for all $i\ge 1$. Hence $M$ has nonnegative Ricci curvature. By \cite{CheegerGromoll}, $\tM_i$ is compact for all $i\ge 1$. If $\dim \tM_i\ge 2$, then $\l_i>0$ by Lemma \ref{l-irr-1}. If $\dim\tM_i=1$, then it is biholomorphic to $\mathbb{CP}^1$ because it is compact and has nonnegative Gaussian curvature. Since $\Ric=\eta+\ii\p\bar \p f$, the Gauss-Bonnet theorem implies:
$$
4\pi=\int_{\tM_i}K_i=\l_i\,\vol(\tM_i)
$$
where $K_i$ is the Gaussian curvature of $\tM_i$. Hence $\l_i>0$.  That $\t M_i$ has nonnegative Ricci curvature for each $i \geq 1$ follows from  Lemma \ref{l-irr-2}.  Now let $$\t p=(\t p_0,\t p_1,\dots,\t p_k)\in \tM_0\times\tM_1\times\cdots \times \tM_k=\tM $$
be such that $f(\pi \t p)=\min_M f$. Then at $\t p$,
$$
\wt \Ric\ge \wt \eta.
$$
from which it is easy to see that $\tM_i$ has positive Ricci curvature at $\t p_i$.  Thus $\t M_i$ has quasi positive Ricci curvature for each $i \geq 1$. Hence we are in the situation of case (i) in the Theorem.

  Suppose $\dim\tM_0=0$ and $\l_i>0$ for all $i\ge 1$.  By Yau's theorem on Calabi conjecture \cite{Yau1978}, there is a real-valued function $u$ on $M$ such that $g_{\ijb}+u_{ijb}>0$ is \K metric with Ricci form being $\eta$. Pulling this metric back by $\pi$ gives a \K metric on $\tM$ with Ricci form being $\wt \eta$ which is positive and is bounded away from 0, because $\dim\tM_0=0$ and $\l_i>0$. Hence $\tM$ is compact by Myer's theorem. Using Lemma \ref{l-irr-2}, we are thus in the situation of case (ii) in the Theorem.

  Suppose $\dim\tM_0=0$ and $\l_1\le 0$, say. Then $\dim\tM_1=1$ by Lemma \ref{l-irr-1}. The induced metric on $\tM_1$ is of the form $e^{2\lambda}|dz|^2$ where $|dz|^2$ is the standard Euclidean metric on $\C$ or the unit disk in $\C$. Let $h(\t q)=\t f(\t q,\t p_2,\dots,\t p_k)$. Then the Gaussian curvature of $\tM_1$ satisfies:
  $$
  K_1=\l_1+\Delta_{\tM_1}h=\l_1+\frac12e^{-2\lambda}\Delta_0h
  $$
  by \eqref{eq-Ric} and where $\Delta_0$ is the Euclidean Laplacian. On the other hand, we also have
  $$
  K_1=-\Delta_{\tM_1}\lambda=-\frac12e^{-2\lambda}\Delta_0\lambda
  $$
   Hence
  $$
 \l_1= -\frac12 e^{-2\lambda}\Delta_0\lambda(\lambda+h)
  $$
 which is just the Gaussian curvature of the metric $e^{2\lambda+2h}|dz|^2$.  Since $h$ is bounded, the metric $e^{2\lambda+2h}|dz|^2$ is complete. Hence $\l_1=0$ if and only if $\tM_1$ is biholomorphic to $\C$ and $\l_1<0$ if and only if $\tM_1$ is biholomorphic to the unit disk in $\C$.  In case $\l_1=0$, the Gaussian curvature of $\tM_1$ must be negative somewhere. Otherwise, $\tM_1$ must be flat by the proof of \cite[Theorem 3]{CheegerGromoll}. This is impossible, because $\dim\tM_0=0$. In case $\l_1<0$, it is easy to see that the Gaussian curvature of $\tM_1$ is negative somewhere. Using Lemma \ref{l-irr-2}, we are thus in the situation of case  (iii) in the Theorem.

\end{proof}
 \bibliographystyle{amsplain}

\begin{thebibliography}{10}

 \bibitem{Bando}  Bando, S., {\sl On the classification of three-dimensional compact Kaehler manifolds of nonnegative bisectional curvature}, J. Differential Geom. \textbf{19} (1984), no. 2, 283-297.

 \bibitem{B} Berger, M., {\sl Sur les vari\'et\'es d'Einstein compactes}, C.R. IIIe R\'eunion Math. Expression latine, Namur 1965, 35-55.
\bibitem{BG} Bishop, R.L., Goldberg, S. I.,  {\sl On the second cohomology group of a \K manifold of
positive curvature}, Proc. Amer. Math. Soc. \textbf{16}, (1965), 119-122.
\bibitem{CT} Chau, A., Tam, L.F., {\sl A note on harmonic forms and the boundary of the \K cone}, arXiv:1105.2913.
\bibitem{CheegerGromoll}    Cheeger, J. and Gromoll, D., {\sl
The splitting theorem for manifolds of nonnegative Ricci curvature},
J. Differential Geom. \textbf{6} (1971/72), 119-128.
\bibitem{Chen} Chen, X. X., {\sl On K\"ahler manifolds with positive orthogonal  curvature}, Adv. Math. \textbf{215} (2007), no. 2, 427-445.
 \bibitem{GK}  Goldberg, S.I., Kobayashi, S., {\sl Holomorphic bisectional curvature},
J. Differential Geom., \textbf{1},1967, 225-233.
\bibitem{GZ}Gu, H.L., Zhang, Z.H., {\sl An Extension of Mok's Theorem on the Generalized Frankel Conjecture}, Sci. China Math. \textbf{53} (2010), no. 5, 1253-1264.
  \bibitem{HowardSmythWu1981}Howard, A., Smyth, B. and Wu, H. {\sl On compact K\"ahler manifolds of nonnegative bisectional curvature, I,} Acta Math. \textbf{147} (1981), 51-56.
\bibitem{LiWuZheng} Li, Q., Wu, D. and Zheng, F., {\sl Quadratic bisectional curvature, \K C-spaces,
and constant rank theorems}, preprint.

\bibitem{Mok} Mok, N., {\sl The uniformization theorem for compact \K manifolds of non-
negative bisectional curvature}, J. Differential Geom. \textbf{27} (1988), no. 2, 179-214.
\bibitem{M} Mori, S. {\sl Projective manifolds with ample tangent bundles}, Ann. of Math. (2)
\textbf{110} (1979), 593-606.
\bibitem{SY}  Siu, Y. T., Yau, S. T., {\sl Complex \K manifolds of positive bisectional
curvature}, Invent. Math. \textbf{59} (1980), 189-204.

\bibitem{Wu} Wu, H., {\sl On compact \K manifolds of nonnegative bisectional curvature II},
Acta Math. \textbf{147} (1981), 57-70.
\bibitem{WuYauZheng} Wu, D., Yau, S.T., Zheng, F. {\sl A degenerate Monge Amp\`ere equation and the boundary classes of \K cones}, Math. Res. Lett. \textbf{16} (2009), no.2, 365-374.

\bibitem{Yau} Yau, S.-T. (ed), {\sl Seminar on differential geometry}, Princeton University Press; Tokyo: University of Tokyo Press, \textbf{1982}.

\bibitem{Yau1978}  Yau S.-T.,
 {\sl On the Ricci curvature of a compact K\"ahler manifold and the complex Monge-Amp\`ere equation I,} Comm. Pure and Appl. Math. \textbf{31} (1978), 339-441.
\bibitem{Z} Zhang, X., {\sl On the boundary of K\"ahler cones}, Proc. of A.M.S., posted on June 21, 2011, PII S 0002-9939(2011)10929-8  (to appear in print).
\bibitem{Zheng} Zheng, F. {\sl Complex differential geometry}, American Mathematical Soc.,   International Press, \textbf{2000}.
\end{thebibliography}

 \end{document}